\newcommand{\omitthis}[1]{}
\newcommand{\longtext}[1]{}
\newcommand{\shorttext}[1]{}
\newcommand{\commentaway}[1]{}
\newtheorem{thm}{Theorem}[section]
\newtheorem{cor}[thm]{Corollary}
\title{On equality of objects in categories in constructive type theory}
\author{Erik Palmgren}
\address{Department of Mathematics, Stockholm University, SE--106 91 Stockholm, Sweden}
\email{palmgren@math.su.se}
\urladdr{www.math.su.se/$\sim$palmgren}
\date{August 5, 2017}
\begin{document}

\begin{abstract} 
In this note we remark on the problem of equality of objects in categories formalized in Martin-L\"of's
constructive type theory.  A standard notion of category in this system
is E-category, where no such equality is specified. The main observation here is
that there is no general extension of E-categories to categories with equality on objects, unless
the principle Uniqueness of Identity Proofs (UIP) holds.  We also introduce 
the notion of an H-category, a variant of category with equality on objects, 
which makes it easy to compare to the notion of univalent category proposed 
for Univalent Type Theory by Ahrens, Kapulkin and Shulman.
\end{abstract}

\maketitle

In this note we remark on the problem of equality of objects in categories 
formalized in Martin-L\"of's constructive type theory.  A standard notion of category in this system
is E-category, where no such equality is specified. The main observation here is
that there is no general extension of E-categories to categories with equality on objects,  unless the principle Uniqueness of Identity Proofs (UIP) holds. In fact, for every
type $A$, there is an E-groupoid $A^{\iota}$ which cannot be so extended.
We also introduce the notion of an H-category, a variant of category, which makes it easy to 
compare to the notion of "univalent" category proposed in Univalent Type Theory \cite{HoTT}.

When formalizing mathematical structures in constructive type theory it is common
to interpret the notion of set as a type together with an equivalence relation, and
the notion of function between sets as a function or operation that preserves the
equivalence relations. Such functions are called {\em extensional functions}. This
way of interpreting sets was adopted in Bishop's seminal book \cite{B} on constructive analysis from 1967.
In type theory literature such sets are called {\em setoids}.
 Formally a setoid $X=(|X|,=_X)$ consists
of a type $|X|$ together with a binary relation $=_X$, and a proof object for $=_X$ being an equivalence relation. An extensional function between setoids $f:X \to Y$ consists of a type-theoretic function $|f| : |X| \to |Y|$, and a proof that $f$ respects the equivalence relations, i.e.\
$|f|(x) =_Y |f|(u)$ whenever $x=_X u$. One writes $x: X$ for $x:|X|$, and $f(x)$ for $|f|(x)$ to simplify notation. Every type $A$ comes with a minimal equivalence relation
 ${\rm I}_A(\cdot,\cdot)$, the so-called identity type for $A$. When the type can be inferred
 we write $a \doteq b$ for ${\rm I}_A(a,b)$. The principle of Uniqueness of Identity Proofs (UIP)  for a type $A$ states that
$$({\rm UIP}_A) \qquad \qquad (\forall a, b: A)(\forall p, q: a \doteq b)
p \doteq q.$$
This principle is not assumed in basic type theory, 
but can be proved for types $A$ where ${\rm I}_A(\cdot,\cdot)$ is a decidable relation
(Hedberg's Theorem \cite{HoTT}).

In Univalent Type Theory \cite{HoTT} the identity type
is axiomatized so as allow to quotients, and many other constructions. This makes
it possible to avoid the extra complexity of setoids and their defined equivalence relations.

These two approaches to type theory, lead to different developments of category theory. In both cases there are notions of categories, {\em E-categories} and {\em precategories}, which
are incomplete in some sense.

\section{Categories in standard type theory}

A {\em category with equality of objects} can be formulated in an essentially algebraically manner in type theory. It consists of three setoids ${\rm Ob}({\mathcal C})$, ${\rm Arr}({\mathcal C})$ and ${\rm Cmp}({\mathcal C})$ of objects, arrows and composable pairs of arrows, respectively. There are extensional functions, providing identity arrows to object,
$1: {\rm Ob} \to {\rm Arr}$,  providing domains and codomains to arrowws${\rm dom},{\rm cod}: {\rm Arr} \to {\rm Ob}$, a composition function
${\rm cmp}: {\rm Cmp} \to {\rm Arr}$,  and selection functions ${\rm fst},{\rm snd}: {\rm Cmp} \to {\rm Arr}$ satisfying familiar equations, with the convention that for a composable pair of arrows $u$:
${\rm cod}({\rm fst}(u)) = {\rm dom}({\rm snd}(u))$.
See \cite{CWM, P17} for details.

An equivalent formulation in type theory is the following \cite{P17}:
A {\em hom family presented category ${\mathcal C}$}, or just {\em HF-category}, consists
of a setoid $C$ of objects, and a (proof irrelevant) setoid family of
homomorphisms ${\rm Hom}$ indexed by the product setoid $C \times C$. Moreover
there are elements in the following dependent product setoids
\begin{itemize}
\item[(a)] $1: \Pi({\rm Ob}({\mathcal C}), {\rm Hom} \langle {\rm id}_{{\rm Ob}({\mathcal C})},{\rm id}_{{\rm Ob}({\mathcal C})} \rangle)$
\item[(b)] $\circ :\Pi({\rm Ob}({\mathcal C})^3, 
{\rm Hom}  \langle \pi_2, \pi_3 \rangle \times
{\rm Hom}  \langle \pi_1, \pi_2 \rangle \to
{\rm Hom}  \langle \pi_1, \pi_3 \rangle)$.
\end{itemize}
satisfying
\begin{itemize}
\item[] $f \circ_{a,a,b} 1_a = f  \quad 1_b \circ_{a,b,b} f = f$, if $f: {\rm Hom}(a,b)$,
\item[] $f \circ_{a,c,d} (g \circ_{a,b,c} h) = 
    (f \circ_{b,c,d} g) \circ_{a,b,d} h$, if $f: {\rm Hom}(c,d)$, $g: {\rm Hom}(b,c)$,
$h: {\rm Hom}(a,b)$.
\end{itemize}
Here $g \circ_{a,b,c} h$ is notation for the application $\circ((a,b,c), (g,h))$.

In more detail, the product setoids in (a) and (b) are made using the following constructions:

Let ${\rm Fam}(A)$ denote the type of proof irrelevant families over the setoid $A$.  Such families are closed under the following
pointwise operations:

If $F, G : {\rm Fam}(A)$, then $F \times G : {\rm Fam}(A)$ and $F \to G : {\rm Fam}(A)$.

If $F : {\rm Fam}(A)$, and $f:B \to A$ is extensional, then the composition 
$Ff : {\rm Fam}(B)$.

\medskip
The cartesian product  $\Pi(A,F)$ of a family $F: {\rm Fam}(A)$ 
consists of pairs $f= (|f|, {\rm ext}_f))$ where $f: ({\Pi} x: |A|)|F(x)|$ and ${\rm ext}_f$ is a proof that $|f|$ is extensional,
which is stated as
$$(\forall x, y:A)(\forall p: x=_A y) [|f|(F(p)(x)) =_{F(y)} |f|(y)].$$
Two such pairs $f$ and $f'$ are extensionally equally if and only if $|f|(x)=_{F(x)}|f'|(x)$
for all $x: A$. Then it is easy to check that $\Pi(A,F)$ is a setoid.

\section{E-categories and H-categories in standard type theory}

According to the philosophy  of category theory,  truly categorical notions should not refer to equality of objects. This has a very natural realization in type theory, since there, unlike in set theory, we can choose {\em not to impose} an equality on a type. This leads to the notion  of {\em E-category}.

An {\em E-category} ${\mathcal C}= (C, {\rm Hom}, \circ, 1)$ is the formulation of a 
category where there is a {\em type} $C$  of objects, but no imposed equality, and for each pair of objects $a,b$ there is a setoid ${\rm Hom}(a,b)$ of morphisms from $a$ to $b$. The composition is an extensional function
$$\circ : {\rm Hom}(b,c) \times {\rm Hom}(a,b) \to {\rm Hom}_{\mathcal C}(a,c).$$
satisfying the familiar laws of associativity and identity. 
A functor or an E-functor between E-categories is defined as usual, but the object part does not need to respect any equality of objects (because there is none). 

Now a question is whether we can impose an equality of objects onto an E-category which is compatible with composition, so as to obtain an HF-category.

Define an {\em H-category} ${\mathcal C}= (C, =_C, {\rm Hom}, \circ, 1, \tau)$ to be an E-category with an equivalence relation $=_C$ on the
objects $C$, and a family of morphisms $\tau_{a,b,p} \in {\rm Hom}(a,b)$, for each proof $p : a=_C b$. The morphisms should satisfy the conditions 
\begin{itemize}
\item[(H1)] $\tau_{a,a,p} = 1_a$ for any $p: a =_C a$
\item[(H2)] $\tau_{a,b,p} = \tau_{a,b,q}$ for any $p, q: a =_C b$
\item[(H3)] $\tau_{b,c,q} \circ \tau_{a,b,p} = \tau_{a,c,r}$ for any $p: a =_C b$, $q: b=_C c$ and
$r: a =_C c$.
\end{itemize}
Axioms (H1) and (H3) can be replaced by the special cases 
$\tau_{a,a,{\rm ref}(a)} = 1_a$, and $\tau_{b,c,q} \circ \tau_{a,b,p} = \tau_{a,c,{\rm tr}(q,p)}$ where ${\rm ref}$ and ${\rm tr}$ are specific proofs of reflexivity and transitivity.
Note that by these axioms, it follows that each $\tau_{a,b,p}$ is an isomorphism.
Specifying an H-structure on an E-category ${\mathcal C}=(C, {\rm Hom}, \circ, 1)$ then clearly amounts to
providing an equivalence relation $=_C$ and an E-functor $(C,=_C)^{\#} \to {\mathcal C}$. Here
is $(C,=_C)^{\#}$ is the E-category with objects $C$ and $(a=_Cb, \sim)$ as the  Hom setoid,
where $p \sim q$ is always true.

An H-category ${\mathcal C}$ is {\em skeletal} if $a=_Cb$ whenever $a$ and $b$
are isomorphic in ${\mathcal C}$.

\medskip
To pass between H- and HF-categories we proceed as follows:

For an H-category ${\mathcal C}= (C, =_C, {\rm Hom}, \circ, 1, \tau)$,  define a transportation
function $${\rm Hom}(p,q): 
 {\rm Hom}(a,b) \to 
{\rm Hom}(a',b')$$ for $p:a=_C a'$ and $q:b =_C b'$, by
$${\rm Hom}(p,q)(f) = \tau_{b,b',q} \circ f \circ \tau_{a',a,p^{-1}}.$$
It is straightforward to check that this defines an HF-category.

Conversely, an HF-category ${\mathcal C}= (C, {\rm Hom}, \circ, 1)$ yields an
E-category $(|C|, {\rm Hom}, \circ, 1)$ and we can define, an H-structure on
it by,  for $p: a =_C b$, 
$$\tau_{a,b,p} = {\rm Hom}(r(a),p)(1_a): {\rm Hom}(a,a) \to {\rm Hom}(a,b).$$

\medskip
 A functor between H-categories ${\mathcal C}= (C, =_C, {\rm Hom}, \circ, 1, \tau)$ and 
${\mathcal D}= (D, =_D {\rm Hom}', \circ', 1', \sigma)$ is an E-functor $F$ from 
$(C,  {\rm Hom}, \circ, 1)$ to $(D, {\rm Hom}', \circ', 1')$ such that $a =_C b$ implies
$F(a) =_D F(b)$ and $F(\tau_{a,b,p}) =\sigma_{F(a),F(b),q}$ for $p: a =_C b$ and
$q: F(a) =_D F(b)$.

\medskip
We consider the problem of extending a E-category to an H-category, first from the
classical point of view. The non-constructive
{\em Zermelo axiom of choice} (ZAC) may be stated as follows using setoids:

\begin{quote}
For any setoids  $A$ and $B$, and any binary relation $R$ between $A$ and $B$,
satisfying the totality condition $(\forall x:A)(\exists y: B)\, R(x,y)$, there is an
extensional function $f: A \to B$ such that  $(\forall x:A)\, R(x,f(x))$.
\end{quote}

The principle ZAC implies the principle of excluded middle, PEM, by Diaconescu's Theorem. Then by Hedberg's Theorem also UIP holds \cite{HoTT}.

For any type $S$, let $\hat{S}$ be the setoid $(S,{\rm I}_S(\cdot,\cdot))$. It is well known
that the special case of ZAC where $A$ is such a setoid can be proven in constructive type theory. We call this special case the {\em type theoretic axiom of choice} (TTAC).

For setoid $A$ write $\bar{A}$ for $\widehat{|A|}$.

\begin{thm} \label{selfcn} (Using ZAC) For any setoid $A$ there is an extensional function
$s:A \to \bar{A}$ such that,  for all $x, u:A$, $x =_{\bar{A}} s(x)$,
and 
\begin{equation} \label{exts}
x=_A u \Longrightarrow s(x)  =_{\bar{A}} s(u).
\end{equation}
\end{thm}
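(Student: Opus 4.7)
The plan is a direct application of ZAC to a suitably chosen total relation between the setoids $A$ and $\bar{A}$. Since $A$ and $\bar{A}$ share the same underlying type $|A|$, I can define a relation $R$ on $A \times \bar{A}$ by setting $R(x, y) := (x =_A y)$, interpreting both sides as elements of $|A|$. The output of ZAC applied to $R$ will be the desired selection map $s$.

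First I would check that $R$ is extensional as a relation between the two setoids, i.e., that it respects the equivalences on both coordinates. Extensionality in the first coordinate is immediate from transitivity of $=_A$: if $x =_A x'$, then $x =_A y \iff x' =_A y$. Extensionality in the second coordinate requires: if $y =_{\bar{A}} y'$, that is, $y \doteq y'$, then $(x =_A y) \iff (x =_A y')$. This follows by identity-type transport applied to the predicate $\lambda z.\, x =_A z$. Next I would verify totality of $R$: given $x : A$, take $y := x$ viewed as an element of $\bar{A}$ (since $|\bar{A}| = |A|$); then $R(x, x)$ holds by reflexivity of $=_A$, giving $(\forall x: A)(\exists y: \bar{A})\, R(x, y)$.

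With these two properties in hand, ZAC yields an extensional function $s : A \to \bar{A}$ such that $R(x, s(x))$ holds for every $x: A$, which is the first conclusion $x =_{\bar{A}} s(x)$ of the theorem (read as the setoid assertion produced by $R$). The equation (\ref{exts}) is then precisely the extensionality property of $s$ as a morphism $A \to \bar{A}$ that ZAC supplies, namely that $x =_A u$ implies $s(x) =_{\bar{A}} s(u)$. The only step requiring actual thought is the second-coordinate extensionality of $R$, which is a routine transport argument along $\doteq$; everything else is bookkeeping.
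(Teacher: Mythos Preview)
Your relation $R(x,y) := (x =_A y)$ does not deliver the first conclusion as stated. Since $\bar A = \widehat{|A|} = (|A|, {\rm I}_{|A|})$, the assertion $x =_{\bar A} s(x)$ means ${\rm I}_{|A|}(x, s(x))$, i.e.\ $x \doteq s(x)$. What ZAC gives you from your $R$ is only $R(x, s(x))$, that is $x =_A s(x)$ --- equality in the coarser setoid $A$, not identity-type equality in $\bar A$. Your parenthetical ``(read as the setoid assertion produced by $R$)'' notices the mismatch but does not resolve it: the two statements are genuinely different, and the one required by the theorem is the stronger one.

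The paper instead applies ZAC to the relation ${\rm I}_{|A|}(x,y)$ rather than $x =_A y$. Totality is again witnessed by $y := x$, and now $R(x, s(x))$ is literally ${\rm I}_{|A|}(x, s(x))$, which is exactly $x =_{\bar A} s(x)$; extensionality of the resulting choice function then gives~(\ref{exts}). Note also that the paper's formulation of ZAC imposes no extensionality hypothesis on $R$ --- it asks only for totality --- so your verification that $R$ respects the equalities in both coordinates, while correct for your $R$, is not part of the argument as the paper runs it.
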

\begin{proof} Let $B= \bar{A}$. Apply ZAC to the trivally true statement
$$(\forall x:A)(\exists y: B){\rm I}_{|A|}(x,y).$$
This gives the required extensional function $s$, and extensionality implies that (\ref{exts})
holds.
\end{proof}

The significance of this theorem is that the function $s$ selects exactly one element from each
equivalence class that $=_A$ defines, i.e.\  (\ref{exts}).  Note that the existence of such
selection functions, and the TTAC implies the general ZAC. We refer to \cite{ML06} for the discussion of Zermelo's axiom of choice from the type-theoretic perspective.

\medskip
Any E-category with an equivalence relation on objects, that refine the isomorphism
relation, may be extended to an H-category using ZAC.

\begin{thm} \label{classext}
(ZAC) Assume that ${\mathcal C}= (C, {\rm Hom}, \circ, 1)$ is an E-category, 
and $=_C$ is an equivalence relation on $C$, such that $a$ and $b$ are isomorphic,
whenever $a=_C b$. Then there is a $\tau$ giving an H-structure $(=_C,\tau)$
on ${\mathcal C}$.
\end{thm}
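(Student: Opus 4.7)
The plan is to reduce each $=_C$-class to a canonical representative via Theorem~\ref{selfcn}, pick an isomorphism from each object to its representative using ZAC, and then define $\tau$ by a short telescoping composite. First I would apply Theorem~\ref{selfcn} to the setoid $A := (C,=_C)$, obtaining a function $s : C \to C$ with $x \doteq s(x)$ for all $x$ and $x=_C u \Rightarrow s(x) \doteq s(u)$. In particular, any $p: a=_C b$ yields, by extensionality of $s$, an identity $s(a) \doteq s(b)$ in the underlying type $C$.

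Next, since $a =_C s(a)$ for every $a$ (using reflexivity together with $a \doteq s(a)$), the hypothesis of the theorem provides, for each $a$, an isomorphism $a \to s(a)$ in $\mathcal{C}$. A second application of ZAC (in fact TTAC suffices, since we index over the bare type $C$ viewed as $\hat{C}$) delivers a family $a \mapsto \alpha_a$ selecting such an isomorphism $\alpha_a : a \to s(a)$ for every object $a$. Using the identity $s(a) \doteq s(b)$ above to coerce $\alpha_b^{-1}: s(b) \to b$ into a morphism $s(a) \to b$, I define
\[
\tau_{a,b,p} \;:=\; \alpha_b^{-1} \circ \alpha_a \; : \; a \to b.
\]

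Verification of the axioms is then immediate from telescope cancellation. (H1) becomes $\tau_{a,a,p} = \alpha_a^{-1}\circ\alpha_a = 1_a$; for (H3) the middle pair $\alpha_b \circ \alpha_b^{-1}$ cancels in $\tau_{b,c,q}\circ\tau_{a,b,p}$, leaving $\alpha_c^{-1}\circ\alpha_a = \tau_{a,c,r}$. The only delicate point is (H2): the coercion of $\alpha_b^{-1}$ could \emph{a priori} depend on which identity proof of $s(a) \doteq s(b)$ is chosen to transport along, but ZAC implies UIP (via Diaconescu's theorem followed by Hedberg's theorem, as noted above), so all such proofs agree and $\tau_{a,b,p}$ is indeed independent of $p$. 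The only real obstacle is therefore the careful double use of ZAC to set up the coherent choice of $\alpha_a$'s; after that, the categorical content is just the standard collapsing of a telescope.
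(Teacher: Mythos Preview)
Your proof is correct and follows essentially the same route as the paper: pick representatives via Theorem~\ref{selfcn}, fix isomorphisms $\alpha_a:a\to s(a)$ (the paper's $\phi_a$, built directly from the hypothesis proof object rather than by a second appeal to choice), and set $\tau_{a,b,p}=\alpha_b^{-1}\circ[\text{transport along }s(a)\doteq s(b)]\circ\alpha_a$. The paper merely makes the middle transport explicit as a separate map $\rho_{s(a),s(b),h(a,b,p)}$ defined by ${\rm I}$-elimination; note that your checks of (H1) and (H3), not only (H2), tacitly rely on UIP to make this transport behave trivially.
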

\begin{proof} By the assumption, we have a proof object $\sigma$ such that for each $p: a=_C b$, $\sigma_{a,b,p} : {\rm Hom}(a,b)$ is an isomorphism. By Theorem \ref{selfcn} there is a 
proof object $g$ such that for all $a:C$
$$g(a): {\rm I}_{|C|}(a,s(a)).$$
Since ${\rm I}_{|C|}(\cdot,\cdot)$ is the minimal equivalence relation on $|C|$ there is a
proof object $f$ such that 
$$f: (\forall a b: |C|)({\rm I}_{|C|}(a,b) \to a=_Cb).$$
Thereby we have for each $a:C$ an isomorphism in ${\mathcal C}$,
$$\phi_a  = \sigma_{a,s(a),f(a,s(a),g(a))}: {\rm Hom}(a,s(a)).$$
Using induction on identity one defines $\rho_{a,b,p}: {\rm Hom}(a,b)$  for 
$p: {\rm I}_C(a,b)$
by $$\rho_{a,a,{\rm ref}(a)} =_{\rm def} {\rm id}_a.$$
The UIP property implies (H2).  Property (H3) follows from transitivity and (H2). 
Now by Theorem \ref{selfcn} there is a proof object $h$ such that for $a, b:C$ and
$p: a =_C b$, 
$$h(a,b,p): {\rm I}_C(s(a),s(b)).$$ 
Finally for $p:a =_C b$, we define the isomorphism
$$\tau_{a,b,p} = \phi_b^{-1} \circ \rho_{s(a),s(b),h(a,b,p)}\circ \phi_a.$$
By (H1) -- (H3) for $\rho$,  it follows, using the inverses, that also $\tau$ has these properties.
\end{proof}

\section{E-categories are proper generalizations of H-categories}

The existence of some H-structure on any E-category turns out to be equivalent to UIP.

\begin{thm} If UIP holds for the type $C$, then any E-category with objects $C$ can be extended to
an H-category.
\end{thm}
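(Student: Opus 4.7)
The plan is to take the equivalence relation on $C$ to be the identity type itself, setting $=_C \; := \; \doteq$, and to define the family $\tau$ by identity-type elimination (the $J$-rule). Explicitly, for each fixed $a : C$ consider the family assigning to $b : C$ and $p : a \doteq b$ the setoid ${\rm Hom}(a,b)$; applying $J$ with base value $1_a \in {\rm Hom}(a,a)$ produces a term $\tau_{a,b,p} \in {\rm Hom}(a,b)$ for every such $p$, with the computation rule $\tau_{a,a,{\rm ref}(a)} = 1_a$ holding by the definition of $J$.

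It remains to verify (H1)--(H3). By the remark following the definition of H-category, (H1) and (H3) can be replaced by their restrictions to the canonical proofs ${\rm ref}(a)$ and ${\rm tr}(q,p)$. The restricted (H1) is just the $J$-computation rule above. For the restricted (H3), $\tau_{b,c,q} \circ \tau_{a,b,p} = \tau_{a,c,{\rm tr}(q,p)}$, I would fix $p$ and induct on $q$ by $J$: the base case $q = {\rm ref}(b)$ makes the left-hand side equal to $1_b \circ \tau_{a,b,p} = \tau_{a,b,p}$ and the right-hand side equal to $\tau_{a,b,{\rm tr}({\rm ref}(b),p)}$, which in turn equals $\tau_{a,b,p}$ --- either definitionally, depending on how ${\rm tr}$ is set up, or otherwise via UIP together with (H2).

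Axiom (H2), $\tau_{a,b,p} = \tau_{a,b,q}$ for arbitrary $p,q : a \doteq b$, is the place where the hypothesis is used essentially: UIP supplies a proof $\alpha : p \doteq q$, and a further $J$-induction on $\alpha$ collapses the desired equality of morphisms in the setoid ${\rm Hom}(a,b)$ to reflexivity. This is also the only step that really needs UIP: without it, a $\tau$ defined by $J$-elimination has no reason to be independent of the chosen proof of identity, and this independence is precisely what (H2) demands. In this sense the hypothesis UIP is tightly tailored to the construction rather than an incidental convenience, and there is no genuine obstacle beyond recognising where it must be invoked.
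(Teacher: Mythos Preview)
Your proposal is correct and matches the paper's approach exactly: take $=_C$ to be ${\rm I}_C$, define $\tau$ by identity elimination with base value $1_a$, obtain (H2) from UIP, and derive (H3) from (H2) together with transitivity. Your write-up simply spells out the verification of (H3) in a bit more detail than the paper does.
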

\begin{proof}
The equivalence relation on $C$ will be ${\rm I}_C(\cdot,\cdot)$.
Using induction on identity one defines $\tau_{a,b,p} \in {\rm Hom}(a,b)$  for $p \in {\rm I}(C,a,b)$
by $$\tau_{a,a,{\rm ref}(a)} =_{\rm def} {\rm id}_a.$$
The UIP property implies (H2).  Property (H3) follows from transitivity and (H2). 
\end{proof}

Let $A$ be an arbitrary type. Define the E-category $A^\iota$ where $A$ is 
the type of objects, and hom setoids are given by
$${\rm Hom}(a,b) =_{\rm def} ({\rm I}_A(a,b), \approx)$$
where $p \approx q$ holds if and only if ${\rm I}_{{\rm I}_A(a,b)}(p,q)$ is inhabited.
Let composition be given by the proof object transitivity, and the identity on $a$ is ${\rm ref}(a)$. Then it is well-known that $A^\iota$ is an E-groupoid.

\begin{thm} Let $A$ be a type. Suppose that the E-category $A^\iota$ can be extended
to an H-category. Then UIP holds for $A$.
\end{thm}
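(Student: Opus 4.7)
The plan is to use the assumed H-structure $(=_C, \tau)$ on $A^\iota$ to exhibit, for each identity proof $p : {\rm I}_A(a,b)$, a canonical morphism in ${\rm Hom}(a,b) = {\rm I}_A(a,b)$ that by (H2) depends only on the endpoints $a, b$. Once every $p$ is shown to be $\approx$-equivalent to such a canonical representative, two arbitrary identity proofs with the same endpoints must be $\approx$-equivalent, which is exactly ${\rm UIP}_A$.

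First I would construct a transport $\bar{(-)} : {\rm I}_A(a,b) \to (a =_C b)$. Since $=_C$ is reflexive and ${\rm I}_A$ is the minimal reflexive relation on $A$, identity induction with the clause $\overline{{\rm ref}(a)} := {\rm refl}_{=_C}(a)$ does the job.

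Next I would prove the key lemma: for all $a, b : A$ and $p : {\rm I}_A(a,b)$,
$$p \approx \tau_{a,b,\bar p} \quad \text{in } {\rm Hom}(a,b).$$
This is another identity induction, whose only case is $p = {\rm ref}(a)$, so that $\bar p = {\rm refl}_{=_C}(a)$. Then (H1) yields $\tau_{a,a,{\rm refl}_{=_C}(a)} = 1_a$ in ${\rm Hom}(a,a)$, and since $1_a = {\rm ref}(a)$ in $A^\iota$, this is precisely the required $\approx$-equality. I would expect this step to be essentially immediate rather than a real obstacle.

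Finally, given any two $p, q : {\rm I}_A(a,b)$, axiom (H2) gives $\tau_{a,b,\bar p} \approx \tau_{a,b,\bar q}$, and by the lemma
$$p \approx \tau_{a,b,\bar p} \approx \tau_{a,b,\bar q} \approx q,$$
so ${\rm I}_{{\rm I}_A(a,b)}(p,q)$ is inhabited, yielding ${\rm UIP}_A$. The conceptual heart of the argument is that an H-structure amounts to a proof-irrelevant choice of isomorphism indexed by the equivalence relation; in $A^\iota$ the isomorphisms are themselves identity proofs, so proof irrelevance on the H-side is forced to propagate back to the ambient identity type, which is exactly why no such extension can exist in general.
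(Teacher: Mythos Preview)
Your proof is correct and follows essentially the same approach as the paper: both define a map $f$ (your $\bar{(-)}$) from ${\rm I}_A$ into $=_C$ by identity induction, then prove the key lemma $p \approx \tau_{a,b,f(p)}$ by identity induction using (H1) in the base case. The only cosmetic difference is the final step: the paper applies (H1) a second time at $a=a$ to get $\tau_{a,a,f(p)} \approx {\rm ref}(a)$ and hence $p \approx {\rm ref}(a)$, whereas you invoke (H2) to compare $\tau_{a,b,\bar p}$ with $\tau_{a,b,\bar q}$ directly --- both routes are immediate from the axioms.
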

\begin{proof}  Suppose that $=_A, \tau$ is an H-structure on $A^\iota$.

Now since ${\rm I}_A(a,b)$ is the minimal equivalence relation on $A$, there is a
proof object $f(p): a=_Ab$ for each $p:{\rm I}_A(a,b)$.  Thus $\tau_{a,b,f(p)}: {\rm Hom}(a,b)= {\rm I}_A(a,b)$.  Let $D(a,b,p)$ be the proposition
\begin{equation} \label{eq1}
\tau_{a,b,f(p)} \approx p.
\end{equation}
By (H1) it holds that 
$$\tau_{a,a,f({\rm ref}(a))} \approx {\rm ref}(a),$$
i.e. $D(a,a,{\rm ref}(a))$. Hence by I-elimination (\ref{eq1}) holds. On the other hand,
(H1) gives for $p:{\rm I}_A(a,a)$, that
\begin{equation} \label{eq2}
\tau_{a,a,f(p)} \approx {\rm ref}(a).
\end{equation}
With (\ref{eq1}) this gives 
$$p \approx {\rm ref}(a)$$
for any $p:{\rm I}_A(a,a)$, which is equivalent to UIP for $A$.  
\end{proof}

\begin{cor} Assuming any E-category with $A$ as the type of objects can be extended to an H-category. Then UIP holds for $A$.
\end{cor}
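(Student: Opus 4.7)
The plan is to prove this corollary as an immediate instantiation of the preceding theorem. The hypothesis is a universally quantified statement over all E-categories with object type $A$, so it suffices to exhibit one such E-category that, together with the preceding theorem, forces UIP.

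First I would point out that $A^\iota$, as defined just before the previous theorem, is an E-category whose type of objects is precisely $A$ (its hom setoids are $({\rm I}_A(a,b),\approx)$, composition is transitivity, identities are ${\rm ref}(a)$). Therefore the hypothesis of the corollary applies to $A^\iota$, and we obtain the existence of an H-structure $(=_A,\tau)$ on $A^\iota$. The preceding theorem then directly yields UIP for $A$.

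There is essentially no obstacle here: the corollary is simply the specialization of the preceding theorem to a uniform hypothesis. The only thing worth writing down is the instantiation step, making clear that $A^\iota$ indeed satisfies the premise that ``$A$ is the type of objects''. No separate argument, induction on identity, or manipulation of $\tau$ is required beyond what is already carried out in the proof of the previous theorem.
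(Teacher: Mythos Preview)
Your proposal is correct and matches the paper's approach: the corollary is stated without proof in the paper, being an immediate consequence of applying the hypothesis to the particular E-category $A^\iota$ and then invoking the preceding theorem.
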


In classical category theory any category maybe equipped with isomorphism as equality of objects (using Theorem \ref{classext}). This is thus {\em not} possible in basic type theory, 
with the $A^{\iota}$ as counter examples.

\section{Categories in Univalent Type Theory}  

In Univalent Type Theory \cite{HoTT}, the notion of a {\em set} is a type that satisfies the UIP condition. A {\em precategory} 
\cite[Chapter 9.1]{HoTT} is a
tuple ${\mathcal C} = (C, {\rm Hom},\circ, 1)$ where $C$ is a type, ${\rm Hom}$ is a family of types
over $C \times C$ such that ${\rm Hom}(a,b)$ is a set for any $a,b:C$. Moreover  
$1_a: {\rm Hom}(a,a)$ and
$$\circ : {\rm Hom}(b,c) \times {\rm Hom}(a,b) \to {\rm Hom}(a,c)$$
satisfy the associativity and unit laws up to ${\rm I}$-equality. 

Such a precategory thus forms an E-category by considering the hom set as the setoid 
$({\rm Hom}(a,b),{\rm I}_{{\rm Hom}(a,b)}(\cdot,\cdot))$.

Define $a \cong b$ to be the statement that $a$ and $b$ are isomorphic in ${\mathcal C}$ i.e.
$$(\exists f:{\rm Hom}(a,b))(\exists g:{\rm Hom}(b,a)) \, g\circ f \doteq 1_a \land f\circ g \doteq 1_b.$$
By ${\rm I}$-elimination one defines a function 
\begin{equation} \label{sigf}
\sigma_{a,b}: {\rm I}_C(a,b) \to a \cong b
\end{equation}
by $\sigma_{a,a}({\rm ref}(a)) = (1_a,(1_a, ({\rm ref}(1_a), {\rm ref}(1_a))))$.
Define by taking the first projection $\tau_{a,b,p} = (\sigma_{a,b}(p))_1: {\rm Hom}(a,b)$. 
By ${\rm I}$-induction 
it follows that 
\begin{itemize}
\item[] $\tau_{a,a,{\rm ref}(a)} \doteq 1_a$ for any $p: {\rm I}_C(a,a)$,
\item[] $\tau_{b,c,q} \circ \tau_{a,b,p}  \doteq \tau_{a,c,q \circ p}$ for any 
$p: {\rm I}_C(a,b)$ and $q: {\rm I}_C(b,c)$.
\end{itemize}

For a precategory where $C$ is a set, it follows that for any $p, q:{\rm I}_C(a,b)$,
${\rm I}_{{\rm I}_C(a,b)}(p,q)$ holds, so by substitution
$$\tau_{a,b,p} = \tau_{a,b,q}.$$
Thus $\tau$ gives an H-structure on $C$, so the precategory is in fact an H-category.

\medskip
An {\em univalent category} is a precategory where the function $\sigma_{a,b}$ in (\ref{sigf}) is an equivalence 
for any $a,b:C$; see \cite{AKS} and \cite[Chapter 9.1]{HoTT}. In particular, it means that if $a \cong b$, then ${\rm I}_C(a,b)$. 

An example of a precategory which is not a univalent category is given by $C={\rm N}_2$ where
${\rm Hom}(m,n) = {\rm N}_1$. Here $0 \cong 1$, but ${\rm I}_C(0,1)$ is false.

Note that an UF-category whose type of objects is a set, is a skeletal H-category.

Suppose that ${\mathcal C}$ is a skeletal precategory whose type of objects is a set. Is ${\mathcal C}$ necessarily a univalent category?
Consider the group ${\mathbb Z}_2$ as a one object, skeletal precategory: Let the underlying set be ${\rm N}_1$ and ${\rm Hom}(0,0) ={\rm N}_2$ with $0$ as unit and
$\circ$ as addition. This is not a univalent category, compare Example 9.15 in \cite{HoTT}.
Thus the standard multiplication table presentation of a nontrivial group is not a univalent category.

\end{document}